\documentclass{article}%
\usepackage{amsmath}
\usepackage{amsfonts}
\usepackage{amssymb}
\usepackage{graphicx}%
\setcounter{MaxMatrixCols}{30}
\providecommand{\U}[1]{\protect \rule{.1in}{.1in}}
\newtheorem{theorem}{Theorem}

\newtheorem{corollary}[theorem]{Corollary}

\newtheorem{definition}[theorem]{Definition}

\newtheorem{lemma}[theorem]{Lemma}

\newtheorem{proposition}[theorem]{Proposition}
\newtheorem{remark}[theorem]{Remark}

\newenvironment{proof}[1][Proof]{\noindent \textbf{#1.} }{\  $\Box$}
\begin{document}

\title{On the integral representation of $g$-expectations with terminal constraints}
\author{Xiaojuan Li \thanks{School of Information Engineering, Shandong Youth University Of
Political Science, lxj110055\symbol{64}126.com. Research supported by the Natural Science Foundation of Shandong Province(No. ZR2014AP005)}
}
\date{}

\maketitle

\noindent \textbf{Abstract. }In this paper, we study the integral
representation of $g$-expectations with two kinds of terminal constraints, and
obtain the corresponding necessary and sufficient conditions.

\  \  \

\noindent \textbf{Keywords: }Backward stochastic differential equations,
$g$-expectations, Conditional $g$-expectations.

\noindent \textbf{MSC-classification}: 60H10, 60H30

\section{Introduction}

Pardoux and Peng \cite{PP} showed that the following type of nonlinear
backward stochastic differential equation (BSDE for short)%
\[
Y_{t}=\xi+\int_{t}^{T}g(s,Y_{s},Z_{s})ds-\int_{t}^{T}Z_{s}dW_{s}%
\]
has a unique solution $(Y,Z)$ under some conditions on $g$, where $\xi$ is
called terminal value and $g$ is called the generator. Based on the solution
of BSDEs, Peng \cite{P2} introduced the notion of $g$-expectations
$\mathcal{E}_{g}[\cdot]:L^{2}(\mathcal{F}_{T})\rightarrow \mathbb{R}$, which is
the first kind of dynamically consistent nonlinear expectations. Moreover,
Coquet et al. \cite{CHMP} proved that any dynamically consistent nonlinear
expectation on $L^{2}(\mathcal{F}_{T})$ under certain conditions is $g$-expectation.

One problem of $g$-expectation is to find the condition of $g$ under which the following integral
representation
\begin{equation}
\mathcal{E}_{g}[\xi]=\int_{-\infty}^{0}(\mathcal{E}_{g}[I_{\{ \xi \geq
t\}}]-1)dt+\int_{0}^{\infty}\mathcal{E}_{g}[I_{\{ \xi \geq t\}}]dt \label{RE}%
\end{equation}
holds. Chen et al. \cite{CCD} proved that the integral representation (\ref{RE})
holds for each $\xi \in L^{2}(\mathcal{F}_{T})$ if and only if $\mathcal{E}%
_{g}[\cdot]$ is a classical linear expectation under the assumptions: $g$ is
continuous in $t$ and $W$ is $1$-dimensional Brownian motion. Without these
assumptions on $g$ and $W$, Hu \cite{Hu1, Hu2} showed that the above result on
integral representation (\ref{RE}) for each $\xi \in L^{2}(\mathcal{F}_{T})$
still holds. For the integral representation (\ref{RE}) with terminal
constraints on $\xi=\Phi(X_{T})$, where $\Phi$ is a monotonic function and $X$
is a solution of stochastic differential equation (SDE for short), Chen et al.
\cite{CS, CKW} obtained a necessary and sufficient condition under the above
assumptions on $g$ and $W$, and gave a sufficient condition for
multi-dimensional Brownian motion.

In this paper, we want to study the integral representation (\ref{RE}) with the following
two kinds of terminal constraints on $\xi=\Phi(X_{T})$: one is for the monotonic
$\Phi$, the other is for the measurable $\Phi$. Specially, we make further research to the structure of $Z$ in the
BSDE and apply it to obtain the corresponding necessary and sufficient conditions without
the above assumptions on $g$ and $W$, which is weaker than the sufficient
condition in \cite{CKW} (see Remark \ref{NeRe6} in Section 3 for detailed
explanation). Furthermore, this method can be extended to solve more general terminal
constraints on $\xi$.

This paper is organized as follows: In Section 2, we recall some basic results
of BSDEs and $g$-expectations. The main result is stated and proved in Section 3.

\section{Preliminaries}

Let $(W_{t})_{t\geq0}=(W_{t}^{1},\ldots,W_{t}^{d})_{t\geq0}$ be a
$d$-dimensional standard Brownian motion defined on a completed probability
space $(\Omega,\mathcal{F},P)$ and $(\mathcal{F}_{t})_{0\leq t\leq T}$ be the
natural filtration generated by this Brownian motion, i.e.,%
\[
\mathcal{F}_{t}:=\sigma \{W_{s}:s\leq t\} \vee \mathcal{N},
\]
where $\mathcal{N}$ is the set of all $P$-null subsets. Fix $T>0$, we denote
by $L^{2}(\mathcal{F}_{t};\mathbb{R}^{m})$, $t\in \lbrack0,T]$, the set of all
$\mathbb{R}^{m}$-valued square integrable $\mathcal{F}_{t}$-measurable random
vectors and $L^{2}(0,T;\mathbb{R}^{m})$ the space of all progressively
measurable, $\mathbb{R}^{m}$-valued processes $(a_{t})_{t\in \lbrack0,T]}$ with
$E[\int_{0}^{T}|a_{t}|^{2}dt]<\infty$.

We consider the following forward-backward stochastic differential equations:%
\begin{equation}
\left \{
\begin{array}
[c]{l}%
dX_{s}^{t,x}=b(s,X_{s}^{t,x})ds+\sigma(s,X_{s}^{t,x})dW_{s},\ s\in \lbrack
t,T],\\
X_{t}^{t,x}=x\in \mathbb{R}^{n},
\end{array}
\right.  \label{SDE1}%
\end{equation}%
\begin{equation}
y_{s}^{t,x}=\Phi(X_{T}^{t,x})+\int_{s}^{T}g(r,y_{r}^{t,x},z_{r}^{t,x}%
)dr-\int_{s}^{T}z_{r}^{t,x}dW_{r}. \label{BSDE1}%
\end{equation}
In this paper, we use the following assumptions:

\begin{description}
\item[(S1)] $b:[0,T]\times \mathbb{R}^{n}\rightarrow \mathbb{R}^{n}$,
$\sigma:[0,T]\times \mathbb{R}^{n}\rightarrow \mathbb{R}^{n\times d}$ are measurable.

\item[(S2)] There exists a constant $K_{1}\geq0$ such that%
\[
|b(t,x)-b(t,x^{\prime})|+|\sigma(t,x)-\sigma(t,x^{\prime})|\leq K_{1}%
|x-x^{\prime}|,\  \forall t\leq T,x,x^{\prime}\in \mathbb{R}^{n}.
\]

\item[(S3)] $\int_{0}^{T}(|b(t,0)|^{2}+|\sigma(t,0)|^{2})dt<\infty$.

\item[(H1)] $g:[0,T]\times \mathbb{R}\times \mathbb{R}^{d}\rightarrow \mathbb{R}$
is measurable.

\item[(H2)] There exists a constant $K_{2}\geq0$ such that%
\[
|g(t,y,z)-g(t,y^{\prime},z^{\prime})|\leq K_{2}(|y-y^{\prime}|+|z-z^{\prime
}|),\  \forall t\leq T,y,y^{\prime}\in \mathbb{R},z,z^{\prime}\in \mathbb{R}%
^{d}.
\]

\item[(H3)] $g(t,y,0)\equiv0$ for each $(t,y)\in \lbrack0,T]\times \mathbb{R}.$

\item[(H3')] $\int_{0}^{T}|g(t,0,0)|^{2}dt<\infty$.

\item[(H4)] $\Phi:\mathbb{R}^{n}\rightarrow \mathbb{R}$ is measurable and
satisfies $\Phi(X_{T}^{t,x})\in L^{2}(\mathcal{F}_{T})$.
\end{description}

\begin{remark}
\label{NeRe1}Obviously, (H3) implies (H3').
\end{remark}

It is well-known that the SDE (\ref{SDE1}) has a unique solution $(X_{s}%
^{t,x})_{s\in \lbrack t,T]}\in L^{2}(t,T;\mathbb{R}^{n})$ under the assumptions
(S1)-(S3). Under the assumptions (H1), (H2), (H3') and (H4), Pardoux and Peng
\cite{PP} showed that the BSDE (\ref{BSDE1}) has a unique solution
$(y_{s}^{t,x},z_{s}^{t,x})_{s\in \lbrack0,T]}\in L^{2}(0,T;\mathbb{R}^{1+d})$.
Moreover, the following result holds.

\begin{theorem}
\label{Th1}(\cite{EPQ, PP1}) Suppose (S1)-(S3), (H1), (H2), (H3') and (H4)
hold. If $b$, $\sigma$, $g$ and $\Phi \in C_{b}^{1,3}$, then

\begin{description}
\item[(i)] $u(t,x):=y_{t}^{t,x}\in C^{1,2}([0,T]\times \mathbb{R}^{n})$ and
solves the following PDE:%
\[
\left \{
\begin{array}
[c]{l}%
\partial_{t}u(t,x)+\mathcal{L}u(t,x)+g(t,u(t,x),\sigma^{T}(t,x)\partial
_{x}u(t,x))=0,\\
u(T,x)=\Phi(x),
\end{array}
\right.
\]
where%
\[
\mathcal{L}u(t,x)=\frac{1}{2}\sum_{i,j=1}^{n}(\sigma \sigma^{T})_{ij}%
(t,x)\partial_{x_{i}x_{j}}^{2}u(t,x)+\sum_{i=1}^{n}b_{i}(t,x)\partial_{x_{i}%
}u(t,x).
\]

\item[(ii)] $z_{s}^{t,x}=\sigma^{T}(s,X_{s}^{t,x})\partial_{x}u(s,X_{s}%
^{t,x})$, $s\in \lbrack t,T]$.
\end{description}
\end{theorem}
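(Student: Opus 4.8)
The plan is to exploit the Markovian structure of the forward--backward system \eqref{SDE1}--\eqref{BSDE1} and then extract both assertions from a single application of Itô's formula. The starting point is the flow property of the SDE, $X_{r}^{t,x}=X_{r}^{s,X_{s}^{t,x}}$ for $t\leq s\leq r\leq T$, which together with the uniqueness of the solution of \eqref{BSDE1} forces the backward component to be a deterministic function of the current state, namely $y_{s}^{t,x}=u(s,X_{s}^{t,x})$ for every $s\in[t,T]$, where $u(t,x)=y_{t}^{t,x}$ is exactly the function in the statement. Taking $s=T$ in \eqref{BSDE1} gives the terminal condition $u(T,x)=\Phi(x)$ at once.

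The hard part, and the reason the strong hypothesis $b,\sigma,g,\Phi\in C_{b}^{1,3}$ is imposed, is to prove that $u\in C^{1,2}([0,T]\times\mathbb{R}^{n})$; once this regularity is secured the remainder is mechanical. I would obtain it by differentiating the coupled system with respect to the spatial initial datum $x$. Formally differentiating \eqref{SDE1} produces the first variation process $\nabla_{x}X^{t,x}$, solving a linear SDE with coefficients $\nabla b$ and $\nabla\sigma$, and differentiating \eqref{BSDE1} produces a linear BSDE for $(\nabla_{x}y^{t,x},\nabla_{x}z^{t,x})$ whose generator involves the partial derivatives of $g$. Under the $C_{b}^{1}$ bounds these linear equations are well posed, and the chain rule applied to $y_{s}^{t,x}=u(s,X_{s}^{t,x})$ at $s=t$ identifies $\partial_{x}u(t,x)=\nabla_{x}y_{t}^{t,x}$; iterating once more, using the $C_{b}^{2}$ and $C_{b}^{3}$ smoothness, controls the second spatial derivatives. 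Joint continuity of these variation processes in $(t,x)$, via standard moment estimates and the Kolmogorov criterion, yields continuity of $\partial_{x}u$ and $\partial_{xx}^{2}u$, and the same variational analysis delivers the time-regularity needed for $u\in C^{1,2}$. This differentiability theory for FBSDEs is precisely what is developed in \cite{PP1, EPQ}, and I would invoke it rather than reprove it.

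With $u\in C^{1,2}$ in hand, the final step is to apply Itô's formula to $s\mapsto u(s,X_{s}^{t,x})$ on $[t,T]$, giving
\[
du(s,X_{s}^{t,x})=\big(\partial_{t}u+\mathcal{L}u\big)(s,X_{s}^{t,x})\,ds+\big(\partial_{x}u(s,X_{s}^{t,x})\big)^{T}\sigma(s,X_{s}^{t,x})\,dW_{s}.
\]
On the other hand, \eqref{BSDE1} in differential form reads $dy_{s}^{t,x}=-g(s,y_{s}^{t,x},z_{s}^{t,x})\,ds+z_{s}^{t,x}\,dW_{s}$, and since $y_{s}^{t,x}=u(s,X_{s}^{t,x})$ the two semimartingale decompositions must agree. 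Matching the $dW_{s}$ parts forces $z_{s}^{t,x}=\sigma^{T}(s,X_{s}^{t,x})\partial_{x}u(s,X_{s}^{t,x})$, which is assertion (ii); matching the $ds$ parts and substituting this expression for $z$ gives $\partial_{t}u+\mathcal{L}u+g(s,u,\sigma^{T}\partial_{x}u)=0$ along the trajectory. Evaluating this continuous identity at $s=t$, where $X_{t}^{t,x}=x$ is deterministic, produces the PDE of (i) at the arbitrary point $(t,x)$. The genuine obstacle is thus the $C^{1,2}$ regularity of $u$; the Markov representation and the Itô matching that bracket it are routine.
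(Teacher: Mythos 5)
The paper offers no proof of this theorem at all---it is quoted directly from \cite{EPQ, PP1}---and your sketch is precisely the standard nonlinear Feynman--Kac argument of those references: the Markov representation $y_{s}^{t,x}=u(s,X_{s}^{t,x})$, $C^{1,2}$ regularity of $u$ via differentiation of the forward--backward system in the initial datum, and then identification of the two semimartingale decompositions by It\^{o}'s formula to obtain both the PDE and $z_{s}^{t,x}=\sigma^{T}(s,X_{s}^{t,x})\partial_{x}u(s,X_{s}^{t,x})$. Since you defer the hard regularity step to exactly the sources the paper itself cites, your proposal is correct and takes essentially the same route.
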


\begin{remark}
\label{Re1}For notation simplicity, when $t=0$ and only one $x$, we write
$(X_{t},y_{t},z_{t})_{t\in \lbrack0,T]}$ for the solution of SDE (\ref{SDE1})
and BSDE (\ref{BSDE1}) in the following.
\end{remark}

Using the solution of BSDE, Peng \cite{P2} proposed the following consistent
nonlinear expectations.

\begin{definition}
\label{De1} Suppose $g$ satisfies (H1)-(H3). Let $(y_{t,}z_{t})_{t\in
\lbrack0,T]}$ be the solution of BSDE (\ref{BSDE1}) with terminal value
$\xi \in L^{2}(\mathcal{F}_{T})$, i.e.,%
\[
y_{t}=\xi+\int_{t}^{T}g(s,y_{s},z_{s})ds-\int_{t}^{T}z_{s}dW_{s}.
\]
Define%
\[
\mathcal{E}_{g}[\xi|\mathcal{F}_{t}]:=y_{t}\quad \mbox{for each}\ t\in
\lbrack0,T].
\]
$\mathcal{E}_{g}[\xi|\mathcal{F}_{t}]$ is called the conditional
$g$-expectation of $\xi$ with respect to $\mathcal{F}_{t}$. In particular, if
$t=0$, we write $\mathcal{E}_{g}[\xi]$ which is called the $g$-expectation of
$\xi$.
\end{definition}

\begin{remark}
\label{NeRe2}The assumption (H3) is important in the definition of
$g$-expectation. In particular, under the assumptions (H1)-(H3), if $\xi \in
L^{2}(\mathcal{F}_{t_{0}})$ with $t_{0}<T$, then $\mathcal{E}_{g}%
[\xi|\mathcal{F}_{t}]=\xi$ for $t\in \lbrack t_{0},T]$.
\end{remark}

The following standard estimates of BSDEs can be found in \cite{EPQ, P1,
BCHMP}.

\begin{proposition}
\label{Pr1}Suppose $g_{1}$ and $g_{2}$ satisfy (H1), (H2) and (H3'). Let
$(y_{t,}^{i}z_{t}^{i})_{t\in \lbrack0,T]}$ be the solution of BSDE
(\ref{BSDE1}) with the generator $g_{i}$ and terminal value $\xi_{i}\in
L^{2}(\mathcal{F}_{T})$, $i=1,2$. Then there exists a constant $C>0$ depending
on $K_{2}$ and $T$ such that
\[
E[\sup_{0\leq t\leq T}|y_{t}^{1}-y_{t}^{2}|^{2}+\int_{0}^{T}|z_{t}^{1}%
-z_{t}^{2}|^{2}dt]\leq CE[|\xi^{1}-\xi^{2}|^{2}+\int_{0}^{T}|\bar{g}_{t}%
|^{2}dt],
\]
where $\bar{g}_{t}=g_{1}(t,y_{t}^{1},z_{t}^{1})-g_{2}(t,y_{t}^{1},z_{t}^{1})$.
\end{proposition}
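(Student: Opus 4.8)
The plan is to run the classical \emph{a priori} estimate for BSDEs by applying It\^o's formula to the squared difference of the two solutions, and then to bound the three resulting terms (the quadratic-variation term, the drift term, and the stochastic-integral term) using the Lipschitz assumption (H2), Young's inequality, Gronwall's inequality and the Burkholder--Davis--Gundy (BDG) inequality. First I would introduce the differences $\hat{y}_{t}=y_{t}^{1}-y_{t}^{2}$, $\hat{z}_{t}=z_{t}^{1}-z_{t}^{2}$ and $\hat{\xi}=\xi^{1}-\xi^{2}$, so that $\hat{y}$ solves
\[
\hat{y}_{t}=\hat{\xi}+\int_{t}^{T}\bigl[g_{1}(s,y_{s}^{1},z_{s}^{1})-g_{2}(s,y_{s}^{2},z_{s}^{2})\bigr]ds-\int_{t}^{T}\hat{z}_{s}dW_{s}.
\]
The key algebraic step is to split the generator difference as $\bar{g}_{s}+\bigl[g_{2}(s,y_{s}^{1},z_{s}^{1})-g_{2}(s,y_{s}^{2},z_{s}^{2})\bigr]$, where the second bracket is controlled by $K_{2}(|\hat{y}_{s}|+|\hat{z}_{s}|)$ thanks to (H2) applied to $g_{2}$.

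Next I would apply It\^o's formula to $|\hat{y}_{t}|^{2}$ on $[t,T]$ and rearrange to obtain
\[
|\hat{y}_{t}|^{2}+\int_{t}^{T}|\hat{z}_{s}|^{2}ds=|\hat{\xi}|^{2}+\int_{t}^{T}2\hat{y}_{s}\bigl[g_{1}(s,y_{s}^{1},z_{s}^{1})-g_{2}(s,y_{s}^{2},z_{s}^{2})\bigr]ds-\int_{t}^{T}2\hat{y}_{s}\hat{z}_{s}dW_{s}.
\]
Using the decomposition above together with Young's inequality of the form $2ab\leq \frac{1}{2}a^{2}+2b^{2}$ (applied so as to absorb a multiple of $|\hat{z}_{s}|^{2}$ into the left-hand quadratic-variation term), the drift integral is dominated by $\frac{1}{2}\int_{t}^{T}|\hat{z}_{s}|^{2}ds$ plus a constant multiple of $\int_{t}^{T}|\hat{y}_{s}|^{2}ds$ plus $\int_{t}^{T}|\bar{g}_{s}|^{2}ds$. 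Taking expectations (the stochastic integral has zero expectation once one checks integrability, or one localizes by stopping times) yields an estimate of the shape $E|\hat{y}_{t}|^{2}+\frac{1}{2}E\int_{t}^{T}|\hat{z}_{s}|^{2}ds\leq E|\hat{\xi}|^{2}+E\int_{0}^{T}|\bar{g}_{s}|^{2}ds+C'E\int_{t}^{T}|\hat{y}_{s}|^{2}ds$, after which Gronwall's inequality controls $\sup_{t}E|\hat{y}_{t}|^{2}$ and, feeding this back in, controls $E\int_{0}^{T}|\hat{z}_{s}|^{2}ds$.

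The main obstacle is the $E[\sup_{0\leq t\leq T}|\hat{y}_{t}|^{2}]$ term, which the pointwise-in-$t$ Gronwall estimate does not directly give. To handle it I would return to the It\^o identity, take the supremum over $t$ before taking expectations, and estimate the martingale term by BDG:
\[
E\Bigl[\sup_{0\leq t\leq T}\Bigl|\int_{t}^{T}2\hat{y}_{s}\hat{z}_{s}dW_{s}\Bigr|\Bigr]\leq C_{0}E\Bigl[\Bigl(\int_{0}^{T}|\hat{y}_{s}|^{2}|\hat{z}_{s}|^{2}ds\Bigr)^{1/2}\Bigr]\leq C_{0}E\Bigl[\sup_{0\leq t\leq T}|\hat{y}_{t}|\Bigl(\int_{0}^{T}|\hat{z}_{s}|^{2}ds\Bigr)^{1/2}\Bigr].
\]
A further application of Young's inequality splits this as $\frac{1}{2}E[\sup_{t}|\hat{y}_{t}|^{2}]+C''E\int_{0}^{T}|\hat{z}_{s}|^{2}ds$, and the first half is absorbed into the left-hand side. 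Combining the already-established bounds on $E\int_{0}^{T}|\hat{z}_{s}|^{2}ds$ and $\sup_{t}E|\hat{y}_{t}|^{2}$ with this absorption step produces the claimed inequality with a constant $C$ depending only on $K_{2}$ and $T$. The delicate points are the correct choice of Young weights so that the $|\hat{z}|^{2}$ and $\sup_{t}|\hat{y}|^{2}$ coefficients can be absorbed, and verifying the integrability needed to discard the martingale's expectation; both are routine but must be done in the right order.
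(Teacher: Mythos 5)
Your proposal is correct: the It\^o-formula-on-$|\hat{y}_t|^2$ argument with the generator splitting $g_1(s,y^1_s,z^1_s)-g_2(s,y^2_s,z^2_s)=\bar{g}_s+[g_2(s,y^1_s,z^1_s)-g_2(s,y^2_s,z^2_s)]$, Young's inequality to absorb $\tfrac12|\hat{z}|^2$, Gronwall for $\sup_t E|\hat{y}_t|^2$, and BDG plus a final absorption for $E[\sup_t|\hat{y}_t|^2]$ is exactly the standard a priori estimate. The paper itself offers no proof of Proposition \ref{Pr1}, citing it as standard (El Karoui--Peng--Quenez and Briand et al.), and the argument in those references is precisely the one you have reconstructed.
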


Assume $g$ satisfies (H1)-(H3), set%
\[
V_{g}(A):=\mathcal{E}_{g}[I_{A}]\quad \mbox{for each}\ A\in \mathcal{F}_{T}.
\]
It is easy to verify that $V_{g}(\cdot)$ is a capacity, i.e., (i)
$V_{g}(\emptyset)=0$, $V_{g}(\Omega)=1$; (ii) $V_{g}(A)\leq V_{g}(B)$ for each
$A\subset B$. The corresponding Choquet integral (see \cite{Choq}) is defined
as follows:%
\[
\mathcal{C}_{g}[\xi]:=\int_{-\infty}^{0}[V_{g}(\xi \geq t)-1]dt+\int
_{0}^{\infty}V_{g}(\xi \geq t)dt\quad \mbox{for each}\  \xi \in L^{2}%
(\mathcal{F}_{T}).
\]
It is easy to check that $\mathcal{C}_{g}[I_{A}]=\mathcal{E}_{g}[I_{A}]$ for
each $A\in \mathcal{F}_{T}$. Moreover, $|\mathcal{C}_{g}[\xi]|<\infty$ for each
$\xi \in L^{2}(\mathcal{F}_{T})$ (see \cite{HHC}).

\begin{definition}
\label{De2} Two random variables $\xi$ and $\eta$ are called comonotonic if%
\[
\lbrack \xi(\omega)-\xi(\omega^{\prime})][\eta(\omega)-\eta(\omega^{\prime
})]\geq0\quad \mbox{for each}\  \omega,\omega^{\prime}\in \Omega.
\]

\end{definition}

The following properties of Choquet integral can be found in \cite{Choq, De,
Den}.

\begin{description}
\item[(1)] Monotonicity: If $\xi \geq \eta,$ then $\mathcal{C}_{g}[\xi
]\geq \mathcal{C}_{g}[\eta].$

\item[(2)] Positive homogeneity: If $\lambda \geq0,$ then $\mathcal{C}%
_{g}[\lambda \xi]=\lambda \mathcal{C}_{g}[\xi].$

\item[(3)] Translation invariance: If $c\in \mathbb{R},$ then $\mathcal{C}%
_{g}[\xi+c]=\mathcal{C}_{g}[\xi]+c.$

\item[(4)] Comonotonic additivity: If $\xi$ and $\eta$ are comonotonic, then
$\mathcal{C}_{g}[\xi+\eta]=\mathcal{C}_{g}[\xi]+\mathcal{C}_{g}[\eta].$
\end{description}

\section{Main result}

Suppose $n=1$, we define%
\[%
\begin{array}
[c]{l}%
\mathcal{H}:=\{ \xi:\exists b,\sigma \text{ satisfying (S1)-(S3) and }x\text{
such that }\xi=X_{T}^{0,x}\}.\\
\mathcal{H}_{1}:=\{ \Phi(\xi)\in L^{2}(\mathcal{F}_{T}):\Phi \text{ is
monotonic and }\xi \in \mathcal{H}\}.\\
\mathcal{H}_{2}:=\{ \Phi(\xi)\in L^{2}(\mathcal{F}_{T}):\Phi \text{ is
measurable and }\xi \in \mathcal{H}\}.
\end{array}
\]

The elements in $\mathcal{H}_{1}$ and $\mathcal{H}_{2}$ can be seen as the
contingent claims of European option. Now we give our main theorem.

\begin{theorem}
\label{Th2}Suppose $g$ satisfies (H1)-(H3). Then

\begin{description}
\item[(i)] $\mathcal{E}_{g}[\cdot]=\mathcal{C}_{g}[\cdot]$ on $\mathcal{H}%
_{1}$ if and only if $g$ is independent of $y$ and is positively homogeneous
in $z$, i.e., $g(t,\lambda z)=\lambda g(t,z)$ for all $\lambda \geq0$;

\item[(ii)] $\mathcal{E}_{g}[\cdot]=\mathcal{C}_{g}[\cdot]$ on $\mathcal{H}%
_{2}$ if and only if $g$ is independent of $y$ and is homogeneous in $z$,
i.e., $g(t,\lambda z)=\lambda g(t,z)$ for all $\lambda \in \mathbb{R}$.
\end{description}
\end{theorem}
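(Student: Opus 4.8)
The common engine is the representation in Theorem~\ref{Th1}: since $n=1$, part (ii) gives $z_s=\partial_x u(s,X_s)\,\sigma^T(s,X_s)$, so the control is always a \emph{scalar} multiple $z_s=\rho_s v(s)$ of the single direction $v(s):=\sigma^T(s,X_s)$ fixed by the SDE. When $\Phi$ is monotone, a comparison argument for the flow $X^{t,x}$ together with monotonicity of $\mathcal{E}_g$ shows $u(t,\cdot)$ is monotone, so $\rho_s=\partial_x u$ has constant sign; for general measurable $\Phi$ the sign of $\rho_s$ is free. I will also use the generator-recovery limit $g(t,y,z)=\lim_{\epsilon\downarrow0}\frac1\epsilon\big(\mathcal{E}_g[\,y+z\cdot(W_{t+\epsilon}-W_t)\,]-y\big)$, valid for a.e.\ $t$; here $y+z\cdot(W_{t+\epsilon}-W_t)=\Phi(X_T)$ with $\Phi=\mathrm{id}$ and $X$ the SDE whose diffusion coefficient equals $z$ on $[t,t+\epsilon]$ and $0$ elsewhere, so all the test variables I need lie in $\mathcal{H}_1\subset\mathcal{H}_2$, and (since the increment is independent of $\mathcal{F}_t$ and $g$ is deterministic) the conditional and unconditional $g$-expectations of these variables agree.

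For \textbf{sufficiency}, assume $g=g(t,z)$ is (positively, resp.\ fully) homogeneous. Given $\xi=\Phi(X_T)$ with $\Phi$ monotone (resp.\ measurable), homogeneity and $z_s=\rho_s v(s)$ give $g(s,z_s)=\rho_s\,g(s,v(s))=\langle\beta_s,z_s\rangle$ with $\beta_s:=g(s,v(s))\,v(s)/|v(s)|^2$ (and $\beta_s:=0$ where $v(s)=0$); by (H2)–(H3), $|\beta_s|\le K_2$. Hence the BSDE is linear and Girsanov turns it into $\mathcal{E}_g[\Phi(X_T)]=E^{Q}[\Phi(X_T)]$ for a probability $Q\sim P$ depending only on the SDE: in case (ii) full homogeneity makes $\beta_s$ independent of the sign of $\rho_s$, while in case (i) monotonicity keeps every $\rho_s$ of one sign, so one and the same $\beta_s$, hence one $Q$, serves $\xi$ and all its (equimonotone) level-set indicators. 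Taking $\Phi$ to be these indicators gives $V_g(\xi\ge t)=Q(\xi\ge t)$, whence $\mathcal{C}_g[\xi]=E^{Q}[\xi]=\mathcal{E}_g[\xi]$ by the classical layer-cake identity for the genuine expectation $E^Q$. The passage from smooth $b,\sigma,\Phi$ (needed for Theorem~\ref{Th1}) to general coefficients is handled by approximation, using Proposition~\ref{Pr1} for $(y,z)$ and $L^2$-continuity of $\mathcal{C}_g$.

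For \textbf{necessity}, note $\mathcal{H}_1$ and $\mathcal{H}_2$ are closed under $\xi\mapsto\xi+c$ and $\xi\mapsto\lambda\xi$ ($\lambda\ge0$), since $\Phi+c$ and $\lambda\Phi$ remain monotone (resp.\ measurable). As $\mathcal{C}_g$ is translation invariant and positively homogeneous, the hypothesis $\mathcal{E}_g=\mathcal{C}_g$ transfers both properties to $\mathcal{E}_g$ on the class. Feeding the test variables $y+z\cdot(W_{t+\epsilon}-W_t)$ into the recovery limit, translation invariance yields $g(t,y+c,z)=g(t,y,z)$ (so $g$ is independent of $y$), and positive homogeneity yields $g(t,\lambda z)=\lambda g(t,z)$ for $\lambda\ge0$. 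This proves (i).

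For (ii) it remains to upgrade to $\lambda<0$, i.e.\ $g(t,-z)=-g(t,z)$, and this is the \textbf{main obstacle}: negative homogeneity is \emph{not} a property of the Choquet integral, so it cannot be read off from $\mathcal{C}_g$ as above, and comonotonic additivity only ever couples equidirectionally monotone variables. By the sufficiency analysis, $\mathcal{E}_g$ restricted to nondecreasing (resp.\ nonincreasing) functions of $X_T$ is a fixed linear expectation $E^{Q^+}$ (resp.\ $E^{Q^-}$) governed by $g$ along $+v$ (resp.\ $-v$), and full homogeneity is exactly the statement $Q^+=Q^-$. I would therefore choose a genuinely non-monotone $\Phi$ (for instance a tent-shaped profile, whose super-level sets are intervals $[a_t,b_t]$) and compare both sides: $\mathcal{E}_g[\Phi(X_T)]$ solves a PDE whose nonlinearity switches between the $+v$ and $-v$ regimes across the sign change of $\partial_x u$, while $\mathcal{C}_g[\Phi(X_T)]=\int V_g(a_t\le X_T\le b_t)\,dt$ builds each interval-capacity from a left (increasing) edge and a right (decreasing) edge. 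Equality for all such $\Phi$ forces the two regimes to coincide, i.e.\ $Q^+=Q^-$ and hence $g(t,-z)=-g(t,z)$; the quantitative extraction, conveniently via the short-time recovery limit applied to the decreasing piece, is the technical heart of the argument.
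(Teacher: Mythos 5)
Your sufficiency argument and your necessity argument for (i) are sound in outline, but the necessity of (ii) --- which you yourself label ``the technical heart'' --- is a genuine gap, not a proof. Everything after ``I would therefore choose a genuinely non-monotone $\Phi$'' is a plan: the sentence ``equality for all such $\Phi$ forces the two regimes to coincide, i.e.\ $Q^{+}=Q^{-}$'' is precisely the assertion that must be established, and you give no mechanism for extracting it. Moreover, the comparison you set up does not close by bookkeeping alone: $\mathcal{C}_{g}[\Phi(X_{T})]=\int V_{g}(a_{t}\le X_{T}\le b_{t})\,dt$ is built from capacities of intervals, and the capacity of an interval is itself the $g$-expectation of a non-monotone indicator --- a third unknown that is \emph{not} a function of $Q^{+}$ and $Q^{-}$ --- so ``forcing the two regimes to coincide'' does not follow from matching the two sides; it requires an actual quantitative computation (note also that if $\Phi$ is taken to be a single interval indicator, the identity $\mathcal{E}_{g}[\Phi(X_{T})]=V_{g}(a\le X_{T}\le b)=\mathcal{C}_{g}[\Phi(X_{T})]$ holds trivially and yields no constraint on $g$). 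The paper fills exactly this hole by testing $\mathcal{E}_{g}=\mathcal{C}_{g}$ on the class $\{l_{1}I_{\{W_{T}-W_{t}\ge a\}}+l_{2}I_{\{b\ge W_{T}-W_{t}\ge a\}}\}\subset\mathcal{H}_{2}$, where two level sets of mixed monotonicity are coupled through one BSDE while the Choquet side decomposes by the layer-cake formula, and then invoking the computation from the proof of Lemma 9 in \cite{Hu1} to conclude $g(t,z)=g(t,1)z$. Without that step, or an equivalent explicit argument, part (ii) remains unproven.

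On the parts that do work, your route differs from the paper's in an interesting way. Your necessity for (i) is the paper's argument: test variables $y+z\cdot(W_{t+\varepsilon}-W_{t})\in\mathcal{H}_{1}$, transfer of translation invariance and positive homogeneity from $\mathcal{C}_{g}$ to $\mathcal{E}_{g}$, and recovery of $g$ (the paper cites Lemma 2.1 of \cite{J} for exactly your recovery limit); your version even treats $d>1$ directly, whereas the paper reduces to $d=1$ via the projections $g^{a}$. Your sufficiency, however, is genuinely different. The paper proves the additivity $\mathcal{E}_{g}[\sum_{i}\phi_{i}(X_{T})]=\sum_{i}\mathcal{E}_{g}[\phi_{i}(X_{T})]$ for smooth comonotone $\phi_{i}$ (using $z_{t}^{i}=\sigma^{T}(t,X_{t})D_{t}^{i}$ with $D_{t}^{i}\ge0$ and positive homogeneity to pull $g$ inside the sum), and then matches this against the comonotonic additivity of $\mathcal{C}_{g}$ on step-function approximations of $\Phi$. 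You use the same structural fact $z_{t}=\rho_{t}\sigma^{T}(t,X_{t})$ but to \emph{linearize}: $g(t,z_{t})=\langle\beta_{t},z_{t}\rangle$ with bounded $\beta$ depending only on $(g,\sigma,X)$ and the sign regime, so Girsanov exhibits $\mathcal{E}_{g}$ as one fixed linear expectation $E^{Q^{+}}$ on the whole nondecreasing cone (and on all of $\mathcal{H}_{2}$ in case (ii)), after which $\mathcal{E}_{g}=\mathcal{C}_{g}$ is the ordinary layer-cake identity under $Q^{+}$. That is cleaner and in fact stronger, since it shows $\mathcal{E}_{g}$ is linear on each monotone cone. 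Be aware, though, that both routes rest on the same nontrivial lemma --- that the representation $z_{t}=\sigma^{T}(t,X_{t})D_{t}$ with sign control survives the passage from $C_{b}^{1,3}$ data to merely Lipschitz, measurable-in-$t$ coefficients and to non-smooth $\Phi$ --- which the paper proves via its mollification Lemma \ref{Le2} and the limit argument around (\ref{EQ3})--(\ref{EQ4}), and which you dispatch with ``handled by approximation''; likewise the $L^{2}$-continuity of $\mathcal{C}_{g}$ you invoke is itself a nontrivial H\"{o}lder-type estimate (the paper's Lemma \ref{Le1}).
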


\begin{remark}
\label{NeRe6}In \cite{CKW}, Chen et al. showed that $\mathcal{E}_{g}%
[\cdot]=\mathcal{C}_{g}[\cdot]$ on $\mathcal{H}_{1}$ under the assumption that
$g$ is positively additive, i.e., $g(t,z_{1}+z_{1}^{\prime},\ldots,z_{d}%
+z_{d}^{\prime})=g(t,z_{1},\ldots,z_{d})+g(t,z_{1}^{\prime},\ldots
,z_{d}^{\prime})$ for $z_{i}z_{i}^{\prime}\geq0$, $i=1,\ldots,d$. Obviously,
this condition on $g$ is stronger than positive homogeneity. For example,
$g(z)=|z|$ is not positively additive, but is positively homogeneous.
\end{remark}

In order to prove this theorem, we need the following lemmas.

\begin{lemma}
\label{Le1}Suppose $g$ satisfies (H1)-(H3). Then for each given $p\in(1,2)$,
there exists a constant $L>0$ depending on $p$, $K_{2}$ and $T$ such that for
each $\xi$, $\eta \in L^{2}(\mathcal{F}_{T})$,%
\[
|\mathcal{C}_{g}[\xi]-\mathcal{C}_{g}[\eta]|\leq L(1+(E[|\xi|^{2}+|\eta
|^{2}])^{\frac{1}{2p}})(E[|\xi-\eta|^{2}])^{\frac{1}{2p}}.
\]
In particular, for each $\xi \in L^{2}(\mathcal{F}_{T})$, we have
$\mathcal{C}_{g}[(\xi \wedge N)\vee(-N)]\rightarrow \mathcal{C}_{g}[\xi]$ as
$N\rightarrow \infty$.
\end{lemma}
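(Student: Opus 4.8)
The plan is to establish the Lipschitz-type estimate first and then deduce the convergence statement as an easy consequence. The key tool is the Choquet integral representation together with the identity $\mathcal{C}_g[I_A]=\mathcal{E}_g[I_A]=V_g(A)$, which lets me reduce everything to comparing the capacities $V_g(\xi\geq t)$ and $V_g(\eta\geq t)$ across the threshold $t$. First I would write
\[
\mathcal{C}_g[\xi]-\mathcal{C}_g[\eta]=\int_{-\infty}^{\infty}\bigl(V_g(\xi\geq t)-V_g(\eta\geq t)\bigr)\,dt,
\]
where the $-1$ correction on the negative axis cancels in the difference. The integrand is supported where the two events differ, so I would control $|V_g(\xi\geq t)-V_g(\eta\geq t)|$ by $V_g$ of the symmetric-difference-type event and estimate the total mass of that region.

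The second step is to bound a single capacity difference. Using monotonicity of $V_g$ and the sub-additivity-style inequalities coming from the BSDE comparison theorem, I expect
\[
\bigl|V_g(\xi\geq t)-V_g(\eta\geq t)\bigr|\leq V_g\bigl(\{\xi\geq t\}\triangle\{\eta\geq t\}\bigr),
\]
and then I would estimate $V_g(A)=\mathcal{E}_g[I_A]$ for an event $A$ by a power of $P(A)$. This is where Proposition \ref{Pr1} enters: comparing the BSDE with terminal value $I_A$ to the one with terminal value $0$ gives $\mathcal{E}_g[I_A]=\mathcal{E}_g[I_A]-\mathcal{E}_g[0]\leq C(E[|I_A|^2])^{1/2}=C\,P(A)^{1/2}$. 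To obtain the exponent $1/(2p)$ rather than $1/2$, I anticipate interpolating: the capacity is bounded both by $P(A)^{1/2}$ and trivially by $1$, and a H\"older/interpolation argument against the indicator structure yields the fractional power $P(A)^{1/(2p)}$ for $p\in(1,2)$, which is what produces the final exponent.

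The third step assembles the integral. After integrating the pointwise bound in $t$, I would use the layer-cake formula $\int_{-\infty}^{\infty}P(\{\xi\geq t\}\triangle\{\eta\geq t\})\,dt = E[|\xi-\eta|]$ and then apply H\"older's inequality (with the conjugate exponents matching $p$) together with the $L^2$-norms of $\xi$ and $\eta$ to separate out the factor $(1+(E[|\xi|^2+|\eta|^2])^{1/(2p)})$ from the factor $(E[|\xi-\eta|^2])^{1/(2p)}$. The constant $L$ then depends only on $p$, $K_2$ and $T$ through the BSDE constant $C$ of Proposition \ref{Pr1} and the H\"older exponents.

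The hard part will be getting the fractional exponent $1/(2p)$ with the correct dependence on the second moments, since the naive comparison-theorem bound only gives exponent $1/2$; the delicate point is combining the interpolation between the $L^\infty$ bound $V_g\leq 1$ and the $L^2$-type bound on the capacity with the layer-cake integration so that the growth factor enters only as $(E[|\xi|^2+|\eta|^2])^{1/(2p)}$ and not a larger power. Once the main estimate is proved, the final assertion follows immediately: taking $\eta=(\xi\wedge N)\vee(-N)$ gives $E[|\xi-\eta|^2]\to 0$ by dominated convergence as $N\to\infty$ (since $\xi\in L^2$), while the prefactor stays bounded, so $\mathcal{C}_g[(\xi\wedge N)\vee(-N)]\to\mathcal{C}_g[\xi]$.
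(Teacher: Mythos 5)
There is a genuine gap, and it sits exactly where you flagged "the hard part." Your quantitative engine is Proposition \ref{Pr1} (the $L^{2}$ stability estimate), which applied to the two BSDEs with terminal values $I_{\{\xi\geq t\}}$ and $I_{\{\eta\geq t\}}$ gives $|V_{g}(\xi\geq t)-V_{g}(\eta\geq t)|\leq C\,P(D_{t})^{1/2}$, where $D_{t}=\{\xi\wedge\eta<t\leq\xi\vee\eta\}$. (Your intermediate step $|V_{g}(A)-V_{g}(B)|\leq V_{g}(A\triangle B)$ is itself unjustified: it requires subadditivity of the capacity, which fails for general $g$ under (H1)--(H3); but this is patchable by applying the stability estimate directly to the difference, as just described.) The unpatchable part is the exponent. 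For $\xi,\eta\in L^{2}$, Chebyshev gives only $P(D_{t})\lesssim t^{-2}$, so $P(D_{t})^{1/2}\lesssim t^{-1}$ and the tail integral $\int_{1}^{\infty}P(D_{t})^{1/2}\,dt$ can diverge; exponent $1/2$ is precisely the borderline case. Your proposed fix, interpolating between $V_{g}\leq 1$ and the $L^{2}$ bound, can only produce exponents $\theta/2\leq 1/2$ on $P(D_{t})$, i.e.\ \emph{weaker} pointwise bounds, which makes the tail worse. The weighted H\"older trick also fails at this endpoint: writing $P(D_{t})^{1/2}=t^{-a}\cdot t^{a}P(D_{t})^{1/2}$, convergence of $\int_{1}^{\infty}t^{-2a}\,dt$ forces $a>1/2$, while integrability of $\int_{1}^{\infty}t^{2a}I_{D_{t}}\,dt\leq(\xi\vee\eta)^{2a+1}/(2a+1)$ against $L^{2}$ data forces $a\leq 1/2$. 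So no rearrangement of the tools you list can close the estimate.

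The paper escapes this by importing a strictly stronger input that your proposal never invokes: the $L^{p}$-stability estimate for BSDEs of Briand et al.\ \cite{BDH} (Proposition 3.2 there), $|\mathcal{E}_{g}[\xi]-\mathcal{E}_{g}[\eta]|\leq L_{1}(E[|\xi-\eta|^{p}])^{1/p}$ with $p\in(1,2)$. Applied to indicators, where $|I_{A}-I_{B}|^{p}=|I_{A}-I_{B}|$, this yields the pointwise bound $L_{1}P(D_{t})^{1/p}$ with $1/p>1/2$, strictly above the borderline. The paper then reduces to $\xi,\eta\geq 0$ (via $\bar g(t,y,z)=-g(t,1-y,-z)$ and $\mathcal{C}_{g}[\xi]=\mathcal{C}_{g}[\xi^{+}]-\mathcal{C}_{\bar g}[\xi^{-}]$), splits the integral at $t=1$, uses Jensen plus the layer-cake identity on $[0,1]$ to get $(E[|\xi-\eta|^{2}])^{1/(2p)}$, and on $[1,\infty)$ uses H\"older with conjugate exponents $(q,p)$ and the weight $t$, together with $\int_{1}^{\infty}tI_{D_{t}}\,dt\leq\frac{1}{2}|\xi^{2}-\eta^{2}|$ and Cauchy--Schwarz, to produce the factor $(E[|\xi|^{2}+|\eta|^{2}])^{1/(2p)}(E[|\xi-\eta|^{2}])^{1/(2p)}$; the condition $p<2$ is exactly what makes $\int_{1}^{\infty}t^{-q/p}\,dt$ finite. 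Your final step (the truncation convergence via dominated convergence) is fine and matches the paper, but the main estimate cannot be reached by your route.
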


\begin{proof}
For each given $p\in(1,2)$, by Proposition 3.2 in Briand et al. \cite{BDH},
there exists a constant $L_{1}>0$ depending on $p$, $K_{2}$ and $T$ such that
for each $\xi$, $\eta \in L^{2}(\mathcal{F}_{T})$,%
\[
|\mathcal{E}_{g}[\xi]-\mathcal{E}_{g}[\eta]|\leq L_{1}(E[|\xi-\eta
|^{p}])^{\frac{1}{p}}.
\]
Set $\bar{g}(t,y,z)=-g(t,1-y,-z)$, it is easy to check that $1-V_{g}%
(A)=V_{\bar{g}}(A^{c})$. Thus $\mathcal{C}_{g}[\xi]=\mathcal{C}_{g}[\xi
^{+}]-\mathcal{C}_{\bar{g}}[\xi^{-}]$. From this we only need to prove the
result for $\xi \geq0$ and $\eta \geq0$. We have%
\begin{align*}
|\mathcal{C}_{g}[\xi]-\mathcal{C}_{g}[\eta]|  &  \leq \int_{0}^{\infty
}|\mathcal{E}_{g}[I_{\{ \xi \geq t\}}]-\mathcal{E}_{g}[I_{\{ \eta \geq
t\}}]|dt\\
&  \leq L_{1}\int_{0}^{\infty}(E[|I_{\{ \xi \geq t\}}-I_{\{ \eta \geq t\}}%
|^{p}])^{\frac{1}{p}}dt\\
&  =L_{1}\int_{0}^{\infty}(E[|I_{\{ \xi \geq t\}}-I_{\{ \eta \geq t\}}%
|])^{\frac{1}{p}}dt,
\end{align*}%
\begin{align*}
\int_{0}^{1}(E[|I_{\{ \xi \geq t\}}-I_{\{ \eta \geq t\}}|])^{\frac{1}{p}}dt  &
\leq(E[\int_{0}^{1}|I_{\{ \xi \geq t\}}-I_{\{ \eta \geq t\}}|dt])^{\frac{1}{p}%
}\\
&  =(E[\int_{0}^{1}I_{\{ \xi \wedge \eta<t\leq \xi \vee \eta \}}dt])^{\frac{1}{p}}\\
&  \leq(E[|\xi-\eta|])^{\frac{1}{p}}\\
&  \leq(E[|\xi-\eta|^{2}])^{\frac{1}{2p}},
\end{align*}%
\begin{align*}
\int_{1}^{\infty}(E[|I_{\{ \xi \geq t\}}-I_{\{ \eta \geq t\}}|])^{\frac{1}{p}%
}dt  &  \leq(\int_{1}^{\infty}t^{-\frac{q}{p}}dt)^{\frac{1}{q}}(E[\int
_{1}^{\infty}t|I_{\{ \xi \geq t\}}-I_{\{ \eta \geq t\}}|dt])^{\frac{1}{p}}\\
&  =(\frac{p-1}{2-p})^{\frac{p-1}{p}}(E[\int_{1}^{\infty}tI_{\{ \xi \wedge
\eta<t\leq \xi \vee \eta \}}dt])^{\frac{1}{p}}\\
&  \leq(\frac{p-1}{2-p})^{\frac{p-1}{p}}(\frac{1}{2}E[|\xi^{2}-\eta
^{2}|])^{\frac{1}{p}}\\
&  \leq(\frac{p-1}{2-p})^{\frac{p-1}{p}}(\frac{1}{2})^{\frac{1}{2p}}%
(E[|\xi|^{2}+|\eta|^{2}])^{\frac{1}{2p}}(E[|\xi-\eta|^{2}])^{\frac{1}{2p}},
\end{align*}
where $\frac{1}{p}+\frac{1}{q}=1$. Thus we obtain the result.
\end{proof}

\begin{lemma}
\label{Le2}Let $b$, $\sigma$ satisfy (S1)-(S3), $g$ satisfy (H1)-(H3) and
$\Phi \in C_{b}^{3}$. Then there exist $b_{k}$, $\sigma_{k}$, $g_{k}\in
C_{b}^{1,3}$, $k\geq1$, such that
\[
E[\sup_{t\in \lbrack0,T]}|X_{t}^{k}-X_{t}|^{2}+\int_{0}^{T}(|\sigma_{k}%
(t,X_{t}^{k})-\sigma(t,X_{t})|^{2}+|z_{t}^{k}-z_{t}|^{2})dt]\rightarrow0,
\]
where $(X_{t},y_{t},z_{t})_{t\in \lbrack0,T]}$ is the solution corresponding to
$b$, $\sigma$, $g$ and $(X_{t}^{k},y_{t}^{k},z_{t}^{k})_{t\in \lbrack0,T]}$ is
the solution corresponding to $b_{k}$, $\sigma_{k}$, $g_{k}$.
\end{lemma}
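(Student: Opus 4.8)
The plan is to build the smooth coefficients $b_k,\sigma_k,g_k$ by combining spatial mollification, temporal mollification and (spatial) truncation, arranged so that the approximating data stay \emph{uniformly} Lipschitz in the space variables, and then to transfer the convergence to the solutions through the standard Gronwall/BDG estimate for the SDE and through the stability estimate of Proposition \ref{Pr1} for the BSDE. Since $\Phi\in C_b^{3}$ is already smooth I would keep it fixed and approximate only $b,\sigma,g$; the approximating system is then smooth enough to fall under Theorem \ref{Th1}. I note at the outset that only the \emph{first}-order (Lipschitz) bounds need to be uniform in $k$, since they alone enter the stability constants, while the higher $x$-derivatives of $b_k,\sigma_k,g_k$ are allowed to blow up with $k$ (mollification makes them $O(\delta^{-(|\alpha|-1)})$), which is harmless for membership in $C_b^{1,3}$ at fixed $k$. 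Throughout I would use the a priori bounds $E[\sup_t|X_t|^2]<\infty$ and $(y,z)\in L^2(0,T;\mathbb{R}^{1+d})$.

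First I would treat $b$ and $\sigma$. For $\delta>0$ set $b^\delta(t,x)=\int b(t,x-y)\phi_\delta(y)\,dy$; since $b$ is $K_1$-Lipschitz in $x$ this converges uniformly, $\sup_{t,x}|b^\delta(t,x)-b(t,x)|\le K_1\delta$, it is $C^\infty$ in $x$ and its Lipschitz constant is still $\le K_1$. A smooth cutoff $\chi_R$ (equal to $1$ on $|x|\le R$, supported in $|x|\le 2R$) supplies whatever boundedness the $C_b$-condition demands while keeping the Lipschitz constant bounded independently of $R$, because $|\nabla\chi_R|\,|b^\delta|=O(1)$ on the annulus by the linear growth coming from (S2)-(S3). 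It remains to smooth in $t$; as $b$ is only measurable in $t$ this is the delicate point, handled by the device of the next paragraph. Granting a sequence $b_k,\sigma_k\in C_b^{1,3}$, uniformly Lipschitz in $x$, with $\int_0^T E[|b_k(s,X_s)-b(s,X_s)|^2+|\sigma_k(s,X_s)-\sigma(s,X_s)|^2]\,ds\to 0$, the equation for $X^k_t-X_t$, the Burkholder-Davis-Gundy inequality and Gronwall's lemma give $E[\sup_t|X^k_t-X_t|^2]\to 0$. Writing $\sigma_k(t,X^k_t)-\sigma(t,X_t)=[\sigma_k(t,X^k_t)-\sigma_k(t,X_t)]+[\sigma_k(t,X_t)-\sigma(t,X_t)]$ and bounding the first bracket by the uniform Lipschitz constant times $|X^k_t-X_t|$ then yields the $\sigma$-term in the statement.

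The main obstacle is the \emph{temporal} smoothing of $g$ (and of $b,\sigma$), because these data are merely measurable in $t$ yet must be evaluated along a solution path: for $g$ the relevant path is $(y_t,z_t)$, and although $y$ is a continuous semimartingale with $E|y_s-y_{s-u}|^2=O(|u|)$, the control process $z$ has \emph{no} pathwise modulus of continuity, so a naive comparison of $g(t-u,y_t,z_t)$ with $g(t-u,y_{t-u},z_{t-u})$ is unavailable. I would circumvent this as follows. Mollify and truncate $g$ in $(y,z)$ first, which converges uniformly by the $K_2$-Lipschitz property exactly as for $b$, and denote the resulting generator (smooth and compactly supported in $(y,z)$, measurable in $t$) by $\tilde g$. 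View $t\mapsto \tilde g(t,\cdot,\cdot)$ as a map into the separable Banach space $C(K)$, where $K$ is a large compact set in $(y,z)$; this map lies in $L^2([0,T];C(K))$, so its temporal mollification $g_k$ converges there, i.e. $\int_0^T\sup_{K}|g_k(t,\cdot,\cdot)-\tilde g(t,\cdot,\cdot)|^2\,dt\to 0$. Restricting the expectation to $\{(y_t,z_t)\in K\}$ converts this uniform-in-$K$ bound into $\int_0^T E[|g_k(t,y_t,z_t)-\tilde g(t,y_t,z_t)|^2\,\mathbf{1}_{\{(y_t,z_t)\in K\}}]\,dt\to 0$, independently of the (possibly singular) law of $(y_t,z_t)$ inside $K$, while the complement is made uniformly small by enlarging $K$, using the linear growth of the generators and the second moments of $(y,z)$. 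The same $C(K)$ argument, run along $X$ with $K$ a ball in $x$ and the tail controlled by $E[\sup_t|X_t|^2]$, provides the temporal smoothing of $b$ and $\sigma$ promised above. A diagonal choice of $\delta$, $R$ and the temporal scale then produces a single sequence $g_k$, uniformly $K_2$-Lipschitz in $(y,z)$, satisfying (H3'), with $\int_0^T E[|g_k(t,y_t,z_t)-g(t,y_t,z_t)|^2]\,dt\to 0$.

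Finally I would invoke Proposition \ref{Pr1} with $g_1=g$, $(y^1,z^1)=(y,z)$ the true solution, $\xi_1=\Phi(X_T)$, and $g_2=g_k$, $\xi_2=\Phi(X^k_T)$, so that $\bar g_t=g(t,y_t,z_t)-g_k(t,y_t,z_t)$. The estimate reads $E[\sup_t|y^k_t-y_t|^2+\int_0^T|z^k_t-z_t|^2\,dt]\le C\,E[|\Phi(X_T)-\Phi(X^k_T)|^2+\int_0^T|\bar g_t|^2\,dt]$, where $C$ depends only on $K_2$ and $T$ and is therefore uniform in $k$ thanks to the uniform Lipschitz bound on $g_k$. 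The terminal term tends to $0$ since $\Phi$ is Lipschitz and $E[\sup_t|X^k_t-X_t|^2]\to 0$, and the generator term tends to $0$ by the previous paragraph; hence $\int_0^T E|z^k_t-z_t|^2\,dt\to 0$, which together with the SDE estimate is exactly the claimed convergence. The expected sticking point is the third paragraph: everything else is the routine stability machinery, whereas making the temporal approximation of $g$ converge despite the irregularity of $z$ is what forces the $L^2([0,T];C(K))$ viewpoint together with the tail control.
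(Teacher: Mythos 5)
Your proof is correct, and its outer skeleton --- truncate to control growth, mollify to reach $C_b^{1,3}$, then transfer the convergence through the Gronwall/BDG estimate for the SDE and through Proposition \ref{Pr1} for the BSDE (with $\bar g_t=g(t,y_t,z_t)-g_k(t,y_t,z_t)$ evaluated along the original solution, so the constant is uniform in $k$) --- coincides with the paper's. Where you genuinely diverge is in the single step that carries the difficulty, namely smoothing in $t$ coefficients that are only measurable in time. The paper mollifies \emph{jointly} in all variables, $(t,x)$ for $b,\sigma$ and $(t,y,z)$ for $g$, notes that the mollifications converge a.e.\ in $t$ at each fixed spatial point, upgrades this via a diagonal extraction over rational spatial points plus the uniform Lipschitz bound to convergence at a.e.\ $t$ for \emph{every} $(x,y,z)$ simultaneously, and then concludes by bounded dominated convergence along the paths: since at a.e.\ fixed $t$ the convergence holds at every spatial point, it holds in particular at $(y_t(\omega),z_t(\omega))$, so the absence of any time-regularity of $z$ --- precisely the obstacle you isolated --- never enters at all. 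You instead mollify in time separately and quantify the convergence in the Bochner space $L^2([0,T];C(K))$, transferring the sup-over-$K$ bound along the paths (independently of the law of $(y_t,z_t)$) and killing the complement of $K$ by linear growth and second moments. Both mechanisms are sound: yours buys a quantitative, law-independent uniform-on-compacts statement at the cost of the vector-valued $L^2$ machinery and the tail bookkeeping, while the paper's joint mollification plus dominated convergence is more elementary and makes the composition with the irregular processes automatic. A minor point of contact: the paper's opening reduction ``we only need to prove the result for bounded $b$, $\sigma$ and $g$'' plays exactly the role of your cutoff $\chi_R$, so your explicit check that the cutoff keeps the Lipschitz constant bounded uniformly (via linear growth on the annulus) is a detail the paper leaves implicit.
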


\begin{proof}
By the standard estimates of SDEs and Proposition \ref{Pr1}, we only need to
prove the result for bounded $b$, $\sigma$ and $g$. For any function $h(u)$,
$u\in \mathbb{R}^{m}$, we will denote, for each $\varepsilon>0$,%
\[
h_{\varepsilon}(u)=\int_{\mathbb{R}^{m}}h(u-v)\varepsilon^{-m}\varphi(\frac
{v}{\varepsilon})dv,
\]
where $\varphi$ is the mollifier in $\mathbb{R}^{m}$ defined by $\varphi
(u)=\exp(-\frac{1}{1-|u|^{2}})I_{\{|u|<1\}}$. By this definition, it is easy
to check that $b_{\varepsilon}$, $\sigma_{\varepsilon}$ and $g_{\varepsilon}$
satisfy (S2) and (H2) with the same Lipschitz constant. Also, we have
$b_{\varepsilon}$, $\sigma_{\varepsilon}$, $g_{\varepsilon}\in C_{b}^{1,3}$
and $(b_{\varepsilon},\sigma_{\varepsilon},g_{\varepsilon})\rightarrow
(b,\sigma,g)$ a.e. in $t$ for each fixed $(x,y,z)\in \mathbb{R}^{2+d}$. Thus by
the diagonal method, we can choose a sequence $b_{k}$, $\sigma_{k}$, $g_{k}\in
C_{b}^{1,3}$ such that $(b_{k},\sigma_{k},g_{k})\rightarrow(b,\sigma,g)$ for
every $(x,y,z)\in \mathbb{Q}^{2+d}$ a.e. in $t$. By the Lipschitz condition, we
get $(b_{k},\sigma_{k},g_{k})\rightarrow(b,\sigma,g)$ for every $(x,y,z)\in
\mathbb{R}^{2+d}$ a.e. in $t$. By the estimates of SDEs, we obtain%
\[
E[\sup_{t\in \lbrack0,T]}|X_{t}^{k}-X_{t}|^{2}]\leq L_{2}E[\int_{0}^{T}%
(|b_{k}(t,X_{t})-b(t,X_{t})|^{2}+|\sigma_{k}(t,X_{t})-\sigma(t,X_{t}%
)|^{2})dt],
\]
where the constant $L_{2}$ depending on $K_{1}$ and $T$. By the bounded
dominated convergence theorem, we can get $E[\sup_{t\in \lbrack0,T]}|X_{t}%
^{k}-X_{t}|^{2}]\rightarrow0$. From this, it is easy to deduce that
$E[\int_{0}^{T}|\sigma_{k}(t,X_{t}^{k})-\sigma(t,X_{t})|^{2}dt]\rightarrow0$.
By Proposition \ref{Pr1}, we can easily obtain $E[\int_{0}^{T}|z_{t}^{k}%
-z_{t}|^{2}dt]\rightarrow0$.
\end{proof}

We now prove the main theorem.

\textbf{Proof of Theorem \ref{Th2}. }We first prove that the condition on $g$
is necessary, and then it is sufficient.

(i) Necessity. We first prove the result for the case $d=1$. For this we
choose $b(s,x)=0$, $\sigma(s,x)=zI_{[t,t+\varepsilon]}(s)$ and $\Phi(x)=x$,
where $z\in \mathbb{R}$, $t<T$ and $\varepsilon>0$ are given. Then%
\[
\mathcal{H}_{1}\supset \{y+z(W_{t+\varepsilon}-W_{t}):\forall y,z\in
\mathbb{R,}t<T,\varepsilon>0\}.
\]
Since $\mathcal{E}_{g}[\cdot]=\mathcal{C}_{g}[\cdot]$ on $\mathcal{H}_{1}$ and
$g$ is deterministic, by the properties of $\mathcal{C}_{g}[\cdot]$ we can get%
\[
\mathcal{E}_{g}[y+z(W_{t+\varepsilon}-W_{t})|\mathcal{F}_{t}]=\mathcal{E}%
_{g}[y+z(W_{t+\varepsilon}-W_{t})]=\mathcal{E}_{g}[z(W_{t+\varepsilon}%
-W_{t})|\mathcal{F}_{t}]+y,
\]%
\[
\mathcal{E}_{g}[\lambda z(W_{t+\varepsilon}-W_{t})|\mathcal{F}_{t}%
]=\lambda \mathcal{E}_{g}[z(W_{t+\varepsilon}-W_{t})|\mathcal{F}_{t}]\text{ for
}\lambda \geq0.
\]
By Lemma 2.1 in Jiang \cite{J}, we can obtain that $g$ is independent of $y$
and $g(t,\lambda z)=\lambda g(t,z)$ for all $\lambda \geq0$. For the case
$d>1$. For each given $a\in \mathbb{R}^{d}$ with $|a|=1$, we define $W^{a}$ by
$W_{t}^{a}=a\cdot W_{t}$ and $g^{a}:[0,T]\times \mathbb{R}\times \mathbb{R}%
\rightarrow \mathbb{R}$ by $g^{a}(t,y,z)=g(t,y,az)$. It is easy to check that
$\mathcal{E}_{g}[\xi]=\mathcal{E}_{g^{a}}[\xi]$ and $\mathcal{C}_{g}%
[\xi]=\mathcal{C}_{g^{a}}[\xi]$ for $\xi \in L^{2}(\mathcal{F}_{T}^{a})$, where
$\mathcal{F}_{T}^{a}:=\sigma \{W_{t}^{a}:t\leq T\} \vee \mathcal{N}$. Thus by
applying the method of $d=1$, we can obtain $g^{a}$ is independent of $y$ and
is positively homogeneous in $z$ for each given $a\in \mathbb{R}^{d}$ with
$|a|=1$, which implies the necessary condition on $g$.

Sufficiency. By Proposition \ref{Pr1} and Lemma \ref{Le1}, we only need to
prove the result for bounded and monotonic $\Phi$. The proof is divided into
two steps.

Step 1. Let $(X_{t})_{t\in \lbrack0,T]}$ be the solution of SDE (\ref{SDE1})
corresponding to $b$ and $\sigma$ satisfying (S1)-(S3) and let $\phi_{i}\in
C_{b}^{3}(\mathbb{R})$, $i=1,\ldots,N$, be non decreasing functions. We assert
that%
\begin{equation}
\mathcal{E}_{g}[\sum_{i=1}^{N}\phi_{i}(X_{T})]=\sum_{i=1}^{N}\mathcal{E}%
_{g}[\phi_{i}(X_{T})]. \label{EQ1}%
\end{equation}
Let $(y_{t}^{i},z_{t}^{i})_{t\in \lbrack0,T]}$, $i=1,\ldots,N$, be the solution
of the following BSDEs:%
\begin{equation}
y_{t}^{i}=\phi_{i}(X_{T})+\int_{t}^{T}g(s,z_{s}^{i})ds-\int_{t}^{T}z_{s}%
^{i}dW_{s}. \label{EQ2}%
\end{equation}
By Lemma \ref{Le2}, we can choose $b_{k}$, $\sigma_{k}$, $g_{k}\in C_{b}%
^{1,3}$, $k\geq1$, such that
\[
E[\int_{0}^{T}(|\sigma_{k}(t,X_{t}^{k})-\sigma(t,X_{t})|^{2}+|z_{t}%
^{i,k}-z_{t}^{i}|^{2})dt]\rightarrow0,\text{ }i=1,\ldots,N,
\]
where $(X_{t}^{k},y_{t}^{i,k},z_{t}^{i,k})_{t\in \lbrack0,T]}$ is the solution
corresponding $b_{k}$, $\sigma_{k}$, $g_{k}$ and terminal value $\phi
_{i}(X_{T}^{k})$. From this we can get%
\begin{equation}
z_{t}^{i,k}\rightarrow z_{t}^{i},\text{ }\sigma_{k}(t,X_{t}^{k})\rightarrow
\sigma(t,X_{t})\text{ \ }dP\times dt\text{-a.s..} \label{EQ3}%
\end{equation}
On the other hand, it follows from Theorem \ref{Th1} that%
\begin{equation}
z_{t}^{i,k}=\sigma_{k}^{T}(t,X_{t}^{k})\partial_{x}u^{i,k}(t,X_{t}^{k}),
\label{EQ4}%
\end{equation}
where $u^{i,k}(t,x):=y_{t}^{i,k;t,x}$. By comparison theorem of SDE and BSDE,
it is easy to verify that $u^{i,k}(t,x)$ is non decreasing in $x$, which
implies $\partial_{x}u^{i,k}(t,X_{t}^{k})\geq0$. Thus by combining equation
(\ref{EQ3}) and (\ref{EQ4}), we obtain that there exist progressive processes
$D_{t}^{i}\geq0$, $i=1,\ldots,N$, such that%
\[
z_{t}^{i}=\sigma^{T}(t,X_{t})D_{t}^{i}.
\]
Note that $g$ is positively homogeneous in $z$, then we get%
\begin{align}
\sum_{i=1}^{N}g(t,z_{t}^{i})  &  =\sum_{i=1}^{N}g(t,\sigma^{T}(t,X_{t}%
)D_{t}^{i})=g(t,\sigma^{T}(t,X_{t}))\sum_{i=1}^{N}D_{t}^{i}\nonumber \\
&  =g(t,\sigma^{T}(t,X_{t})\sum_{i=1}^{N}D_{t}^{i})=g(t,\sum_{i=1}^{N}%
z_{t}^{i}). \label{EQ5}%
\end{align}
Set
\[
Y_{t}=\sum_{i=1}^{N}y_{t}^{i},\text{ }Z_{t}=\sum_{i=1}^{N}z_{t}^{i},
\]
then by combining equation (\ref{EQ2}) and (\ref{EQ5}), we can get%
\[
Y_{t}=\sum_{i=1}^{N}\phi_{i}(X_{T})+\int_{t}^{T}g(s,Z_{s})ds-\int_{t}^{T}%
Z_{s}dW_{s}.
\]
By the definition of $g$-expectation, we obtain equation (\ref{EQ1}).

Step 2. Let $(X_{t})_{t\in \lbrack0,T]}$ be as in Step 1 and let $\Phi$ be a
bounded and monotonic function. Note that for each $\xi \in L^{2}%
(\mathcal{F}_{T})$ and $c\in \mathbb{R}$,
\[
\mathcal{E}_{g}[\xi+c]=\mathcal{E}_{g}[\xi]+c,\text{ }\mathcal{C}_{g}%
[\xi+c]=\mathcal{C}_{g}[\xi]+c,
\]
then we only need to prove the result for $\Phi \geq0$. Since the analysis of
non increasing $\Phi$ is the same as in non decreasing $\Phi$, we only prove
the case for non decreasing $\Phi$ with $0\leq \Phi<M$, where $M>0$ is a
constant. For each given $N>0$, we set%
\[
\Phi_{N}(x)=\sum_{i=1}^{N}\frac{(i-1)M}{N}I_{\{ \frac{(i-1)M}{N}\leq \Phi
<\frac{iM}{N}\}}=\sum_{i=1}^{N}\frac{M}{N}I_{\{ \Phi \geq \frac{iM}{N}\}}.
\]
It is easy to check that $E[|\Phi_{N}(X_{T})-\Phi(X_{T})|^{2}]\leq(\frac{M}%
{N})^{2}\rightarrow0$ as $N\rightarrow \infty$. Thus by Proposition \ref{Pr1}
and Lemma \ref{Le1}, we get
\begin{equation}
\mathcal{E}_{g}[\Phi_{N}(X_{T})]\rightarrow \mathcal{E}_{g}[\Phi(X_{T})],\text{
}\mathcal{C}_{g}[\Phi_{N}(X_{T})]\rightarrow \mathcal{C}_{g}[\Phi(X_{T})]\text{
as }N\rightarrow \infty. \label{EQ6}%
\end{equation}
For each fixed $N>0$, noting that $\Phi$ is non decreasing, then $\{ \Phi
\geq \frac{iM}{N}\}$ is $[a_{i},\infty)$ or $(a_{i},\infty)$, where $a_{i}$ is
a constant. For each $\varepsilon>0$, we define%
\[
\psi_{i,\varepsilon}^{1}(x)=\int_{\mathbb{R}}I_{[a_{i}-\varepsilon,\infty
)}(x-v)\frac{1}{\varepsilon}\varphi(\frac{v}{\varepsilon})dv,\psi
_{i,\varepsilon}^{2}(x)=\int_{\mathbb{R}}I_{(a_{i}+\varepsilon,\infty
)}(x-v)\frac{1}{\varepsilon}\varphi(\frac{v}{\varepsilon})dv,
\]
where $\varphi(v)=\exp(-\frac{1}{1-|v|^{2}})I_{\{|v|<1\}}$. It is easy to
check that $\psi_{i,\varepsilon}^{1}$, $\psi_{i,\varepsilon}^{2}\in C_{b}%
^{3}(\mathbb{R})$ are non decreasing and satisfy $\psi_{i,\varepsilon}%
^{1}\downarrow I_{[a_{i},\infty)}$, $\psi_{i,\varepsilon}^{2}\uparrow
I_{(a_{i},\infty)}$ as $\varepsilon \downarrow0$. Thus we can choose non
decreasing $\phi_{i}^{k}\in C_{b}^{3}(\mathbb{R})$, $k\geq1$, such that
$E[|\phi_{i}^{k}(X_{T})-I_{\{ \Phi \geq \frac{iM}{N}\}}(X_{T})|^{2}%
]\rightarrow0$ as $k\rightarrow \infty$, which implies%
\[
E[|\Phi_{N}(X_{T})-\frac{M}{N}\sum_{i=1}^{N}\phi_{i}^{k}(X_{T})|^{2}%
]\rightarrow0\text{ \ as }k\rightarrow \infty.
\]
By Step 1, Proposition \ref{Pr1} and properties of Choquet integral, we can
obtain%
\begin{align*}
\mathcal{E}_{g}[\Phi_{N}(X_{T})]  &  =\lim_{k\rightarrow \infty}\mathcal{E}%
_{g}[\frac{M}{N}\sum_{i=1}^{N}\phi_{i}^{k}(X_{T})]=\lim_{k\rightarrow \infty
}\frac{M}{N}\mathcal{E}_{g}[\sum_{i=1}^{N}\phi_{i}^{k}(X_{T})]\\
&  =\frac{M}{N}\sum_{i=1}^{N}\lim_{k\rightarrow \infty}\mathcal{E}_{g}[\phi
_{i}^{k}(X_{T})]=\frac{M}{N}\sum_{i=1}^{N}\mathcal{E}_{g}[I_{\{ \Phi \geq
\frac{iM}{N}\}}(X_{T})]\\
&  =\frac{M}{N}\sum_{i=1}^{N}\mathcal{C}_{g}[I_{\{ \Phi \geq \frac{iM}{N}%
\}}(X_{T})]=\mathcal{C}_{g}[\Phi_{N}(X_{T})].
\end{align*}
Thus by (\ref{EQ6}), we get $\mathcal{E}_{g}[\Phi(X_{T})]=\mathcal{C}_{g}%
[\Phi(X_{T})]$. The proof of (i) is complete.

(ii) Necessity. For the case $d=1$, since $\mathcal{H}_{2}\supset
\mathcal{H}_{1}$, we can get that $g$ is independent of $y$ and is positively
homogeneous in $z$ by (i). On the other hand,%
\[
\{l_{1}I_{\{W_{T}-W_{t}\geq a\}}+l_{2}I_{\{b\geq W_{T}-W_{t}\geq
a\}}:t<T,a<b,a,b,l_{1},l_{2}\in \mathbb{R}\} \subset \mathcal{H}_{2},
\]
by the proof of Lemma 9 in \cite{Hu1}, we can obtain $g(t,z)=g(t,1)z$. For the
case $d>1$, the proof is the same as (i).

Sufficiency. By the similar analysis as in (i), for each $\phi_{i}\in
C_{b}^{3}(\mathbb{R})$, $i=1,\ldots,N$, we can get%
\[
\mathcal{E}_{g}[\sum_{i=1}^{N}\phi_{i}(X_{T})]=\sum_{i=1}^{N}\mathcal{E}%
_{g}[\phi_{i}(X_{T})].
\]
The same analysis as in (i), we only need to prove the result for%
\[
\Phi(x)=\sum_{i=1}^{N}b_{i}I_{A_{i}}(x),
\]
where $b_{i}\geq0$, $A_{i}\in \mathcal{B}(\mathbb{R})$ and $A_{i}\supset
A_{i+1}$. Set%
\[
P_{X_{T}}(A):=P(X_{T}^{-1}(A))\text{ \ for }A\in \mathcal{B}(\mathbb{R}),
\]
then by Lusin's theorem, we can choose $\phi_{i}^{k}\in C_{b}^{3}(\mathbb{R}%
)$, $k\geq1$, such that%
\[
E[|\phi_{i}^{k}(X_{T})-I_{A_{i}}(X_{T})|^{2}]=E_{P_{X_{T}}}[|\phi_{i}%
^{k}(x)-I_{A_{i}}(x)|^{2}]\rightarrow0\text{ \ as }k\rightarrow \infty.
\]
Thus we obtain $\mathcal{E}_{g}[\Phi(X_{T})]=\mathcal{C}_{g}[\Phi(X_{T})]$ as
in (i). The proof is complete. $\Box$ \bigskip

In the following, we consider the case $n>1$. We give the following
assumptions on $\sigma$ in SDE (\ref{SDE1}).

\begin{description}
\item[(S4)] There exists a $k\leq d$ such that $\sigma_{i}(t,x)=(\tilde
{\sigma}(t,x),0,\ldots,0)$ for $i=1,\ldots,n$, where $\sigma_{i}$ is the
$i$-th row of $\sigma$ and $\tilde{\sigma}:[0,T]\times \mathbb{R}%
^{n}\rightarrow \mathbb{R}^{1\times k}$.

\item[(S5)] There exists a $k\leq d$ such that $\sigma_{i}(t,x)=(\tilde
{\sigma}(t,x),\tilde{\sigma}_{i}(t,x))$ for $i=1,\ldots,n$, where $\sigma_{i}$
is the $i$-th row of $\sigma$, $\tilde{\sigma}:[0,T]\times \mathbb{R}%
^{n}\rightarrow \mathbb{R}^{1\times k}$ and $\tilde{\sigma}_{i}:[0,T]\times
\mathbb{R}^{n}\rightarrow \mathbb{R}^{1\times(d-k)}$.
\end{description}

Set%
\[%
\begin{array}
[c]{l}%
\mathcal{H}_{3}:=\{ \xi:\exists b,\sigma \text{ satisfying (S1)-(S3), (S4) and
}x\in \mathbb{R}^{n}\text{ such that }\xi=X_{T}^{0,x}\}.\\
\mathcal{H}_{4}:=\{ \xi:\exists b,\sigma \text{ satisfying (S1)-(S3), (S5) and
}x\in \mathbb{R}^{n}\text{ such that }\xi=X_{T}^{0,x}\}.\\
\mathcal{H}_{5}:=\{ \Phi(\xi)\in L^{2}(\mathcal{F}_{T}):\Phi \text{ is
measurable on }\mathbb{R}^{n}\text{ and }\xi \in \mathcal{H}_{3}\}.\\
\mathcal{H}_{6}:=\{ \Phi(\xi)\in L^{2}(\mathcal{F}_{T}):\Phi \text{ is
measurable on }\mathbb{R}^{n}\text{ and }\xi \in \mathcal{H}_{4}\}.
\end{array}
\]
By the same analysis as in the proof of Theorem \ref{Th2} and the method in
the proof of main result in \cite{Hu1, Hu2}, we can obtain the following corollary.

\begin{corollary}
Suppose $g$ satisfies (H1)-(H3). Then

\begin{description}
\item[(i)] $\mathcal{E}_{g}[\cdot]=\mathcal{C}_{g}[\cdot]$ on $\mathcal{H}%
_{5}$ if and only if $\tilde{g}$ is independent of $y$ and is homogeneous in
$\tilde{z}$, where $\tilde{g}(t,y,\tilde{z}):=g(t,y,(\tilde{z},0,\ldots,0))$
for $(t,y,\tilde{z})\in \lbrack0,T]\times \mathbb{R}^{1+k}$;

\item[(ii)] $\mathcal{E}_{g}[\cdot]=\mathcal{C}_{g}[\cdot]$ on $\mathcal{H}%
_{6}$ if and only if $g$ is independent of $y$, $g(t,(\tilde{z},z^{\prime
}))=g_{1}(t,\tilde{z})+g_{2}(t,z^{\prime})$ for $\tilde{z}\in \mathbb{R}^{k}$,
$z^{\prime}\in \mathbb{R}^{d-k}$, $g_{1}$ is homogeneous in $\tilde{z}$ and
$g_{2}$ is linear in $z^{\prime}$.
\end{description}
\end{corollary}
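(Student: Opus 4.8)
The plan is to prove each equivalence by the method already used for Theorem \ref{Th2}, the only new ingredient being the block structure of $\sigma$ forced by (S4) and (S5). The key observation is that, for $C_b^{1,3}$ coefficients, Theorem \ref{Th1} gives $z_s = \sigma^T(s,X_s)\partial_x u(s,X_s)$, and under these structural assumptions this $d$-vector has a rigid shape. Writing $D_s := \sum_{i=1}^n \partial_{x_i} u(s,X_s)$, under (S4) a direct computation gives $z_s = (D_s\,\tilde\sigma(s,X_s),0,\ldots,0)$, so the last $d-k$ coordinates of $z$ vanish identically and only the restriction $\tilde g$ ever enters the BSDE; under (S5) the first $k$ coordinates are still $D_s\,\tilde\sigma(s,X_s)$ (collinear with $\tilde\sigma$), while the last $d-k$ coordinates form the block $\sum_{i=1}^n \tilde\sigma_i^T(s,X_s)\,\partial_{x_i} u(s,X_s)$. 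I would establish these two identities first, reducing to the $C_b^{1,3}$ case by smoothing $b,\sigma,g$ as in Lemma \ref{Le2} (whose $n$-dimensional version is routine and preserves the block structure of $\sigma$).

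For necessity I would feed each class a supply of explicit test claims, exactly as in the proof of Theorem \ref{Th2}. In case (i), choosing $b=0$ and $\tilde\sigma = a\,I_{[t,t+\varepsilon]}$ for a unit $a\in\mathbb{R}^k$, together with a measurable $\Phi$ depending on a single coordinate, reproduces the one-dimensional test family used for Theorem \ref{Th2}(ii); since $\tilde g$ is the only part of $g$ acting, projecting onto $\tilde W^a := a\cdot\tilde W$ and invoking the lemma of Hu \cite{Hu1} as there forces $\tilde g$ to be independent of $y$ and fully homogeneous in $\tilde z$. In case (ii) I would run this argument twice: setting $\tilde\sigma_i\equiv 0$ reproduces case (i) and yields homogeneity of the $\tilde z$-part, while setting $\tilde\sigma\equiv 0$ and keeping one individual block isolates an $\mathbb{R}^{d-k}$-valued driver for which Hu's full $L^2$ representation result \cite{Hu1,Hu2} forces linearity; finally, comonotonic test claims coupling the two blocks force $g(t,(\tilde z,z'))=g(t,(\tilde z,0))+g(t,(0,z'))$, which is the desired additive splitting $g=g_1+g_2$.

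For sufficiency the heart is again the additivity $\mathcal{E}_g[\sum_{i=1}^N \phi_i(X_T)]=\sum_{i=1}^N \mathcal{E}_g[\phi_i(X_T)]$ for $\phi_i\in C_b^3(\mathbb{R}^n)$, proved as in Step 1 of the proof of Theorem \ref{Th2}. Using the shape of $z$ above, the $z$-processes $z^i$ of the individual BSDEs have common blocks $\tilde z_t^{i} = D_t^{i}\,\tilde\sigma(t,X_t)$ and individual blocks $z_t^{\prime i}$; full homogeneity of $g_1$ collapses $\sum_i g_1(t,\tilde z_t^{i})=g_1(t,\sum_i \tilde z_t^{i})$ even though the scalars $D_t^{i}$ carry no sign, while linearity of $g_2$ gives $\sum_i g_2(t,z_t^{\prime i})=g_2(t,\sum_i z_t^{\prime i})$; hence $\sum_i g(t,z_t^{i})=g(t,\sum_i z_t^{i})$ and $\sum_i z^i$ solves the BSDE with terminal value $\sum_i\phi_i(X_T)$, so the additivity follows from the definition of $\mathcal{E}_g$. (In case (i) the individual block is absent, so only homogeneity of $\tilde g$ is used.) I would then approximate an arbitrary measurable $\Phi$ by simple functions $\sum_i b_i I_{A_i}$ with nested $A_i\supset A_{i+1}$, replace each $I_{A_i}$ by smooth $\phi_i^k$ via Lusin's theorem exactly as in Theorem \ref{Th2}(ii), and pass to the limit in both functionals using Lemma \ref{Le1} together with $\mathcal{E}_g[I_A]=\mathcal{C}_g[I_A]$ and the comonotonic additivity of $\mathcal{C}_g$ on the nested indicators.

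The step needing the most care, and the main obstacle, is the exact computation and justification of the shape of $z$ under (S5): I must check that the smoothing preserves (S5), that $z^{i,k}_t = \sigma_k^T\partial_x u^{i,k}$ converges $dP\times dt$-a.e. so that the block identities pass to the limit, and, most delicately, that the collinearity of the common block and the additive splitting of $g$ together deliver $\sum_i g(t,z^i)=g(t,\sum_i z^i)$ for measurable, non-monotone $\Phi$. Here the coefficients $D_t^{i}$ genuinely change sign, so full homogeneity of $g_1$ (rather than the positive homogeneity that sufficed in Theorem \ref{Th2}(i)) is essential, and the decomposition $g=g_1+g_2$ is exactly what decouples the homogeneous common direction from the linear individual directions.
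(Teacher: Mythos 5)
Your proposal is correct and is essentially the paper's own (unwritten) argument: the paper disposes of this corollary in a single sentence, invoking ``the same analysis as in the proof of Theorem \ref{Th2} and the method in the proof of main result in \cite{Hu1, Hu2}'', which is precisely what you execute --- the collinearity structure of $z=\sigma^{T}\partial_{x}u$ forced by (S4)/(S5), the additivity step in which full homogeneity of $g_{1}$ absorbs the sign-changing scalars $D_{t}^{i}$ while linearity of $g_{2}$ handles the individual blocks, and Hu-type test claims for necessity. Your write-up in fact records more detail than the paper does (the explicit block shape of $z$, the preservation of (S4)/(S5) under mollification, and the passage to the limit), so it can stand as a legitimate expansion of the paper's one-line proof.
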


\end{document}